\documentclass[11pt]{amsart}
\usepackage{amsmath}
\usepackage{amsfonts}
\usepackage[active]{srcltx}

\setcounter{MaxMatrixCols}{10}

\newtheorem{theorem}{Theorem}

\newtheorem{lemma}{Lemma}

\newtheorem{corollary}{Corollary}

\newtheorem*{Go}{Theorem G}

\begin{document}
\author{Tsitsino tepnadze}
\title[Approximation Properties of Ces\`{a}ro Means]{On the Approximation
Properties of Ces\`{a}ro Means of Negative Order of Vilenkin-Fourier Series}
\address{T.Tepnadze, Department of Mathematics, Faculty of Exact and Natural
Sciences, Ivane Javakhishvili Tbilisi State University, Chavcha\-vadze str.
1, Tbilisi 0128, Georgia}
\email{tsitsinotefnadze@gmail.com}
\maketitle

\begin{abstract}
In this paper we establish approximation properties of Ces\`{a}ro $%
(C,-\alpha )$ means with $\alpha $ $\epsilon $ $(0,1)$ of Vilenkin-Fourier
series.This result allows one to obtain a condition which is sufficient for
the convergence of the means $\sigma _{n}^{-\alpha }(f,x)$ to $f(x)$ in the $%
L^{p}-$metric.
\end{abstract}

\footnotetext{%
	2010 Mathematics Subject Classification 42C10 .
	\par
	Key words and phrases: Vilenkin system, Ces\`{a}ro means, Convergence in norm}

Let $N_{+}$ denote the set of positive integers, $N:=N_{+}\cup \{0\}.$ Let $%
m:=\left( m_{0},m_{1},...\right) $ denote a sequence of positive integers
not lass then 2. Denote by $Z_{m_{k}}:=\{0,1,...,m_{k}-1\}$ the additive
group of integers modulo $m_{k}$. Define the group $G_{m}$ as the complete
direct product of the groups $Z_{m_{j}}$ with the product of the discrete
topologies of $Z_{mj}$'s.

The direct product of the measures

\begin{equation*}
\mu _{k}\left( \{j\}\right) :=\frac{1}{m_{k}}\text{ \ \ \ \ \ \ \ \ }\left( j%
\text{ }\in Z_{m_{k}}\right) .
\end{equation*}

Is the Haar measure on $G_{m}$ with $\mu \left( G_{m}\right) =1.$ If the
sequence $m$ is bounded, then $G_{m}$\ is called a bounded Vilenkin group.
In this paper we will consider only bounded Vilenkin group. The elements of $%
G_{m}$ can be represented by sequences $x:=\left(
x_{0},x_{1},...,x_{j},...\right) ,$ $\left( x_{j}\in Z_{m_{j}}\right) .$ The
group operation $+$ in $G_{m}$ is given by%
\begin{equation*}
x+y=\left( \left( x_{0}+y_{0\text{ }}\right) mod\text{ }m_{0},...,\left(
x_{k}+y_{k\text{ }}\right) mod\text{ }m_{k},...\right) ,
\end{equation*}%
where $x:=\left( x_{0},...,x_{k},...\right) $ and $y:=\left(
y_{0},...,y_{k},...\right) \in G_{m}.$

The inverse of $+$ will be denoted by $-.$It is easy to give a base for the
neighborhoods of $G_{m}:$

\begin{equation*}
I_{0}\left( x\right) :=G_{m}
\end{equation*}

\begin{equation*}
I_{n}\left( x\right) :=\{y\in G_{m}|y_{0\text{ }}=x_{0\text{ }},...,y_{n-1%
\text{ }}=x_{n-1\text{ }}\}.
\end{equation*}

for $x$ $\in $ $G_{m},$ $n$ $\in $ $N$ define $I_{n}=I_{n}\left( 0\right) $.
Set $e_{n}:=\left( 0,...,0,1,0,...\right) \in $ $G_{m}$ the $n$ th
coordinate of which is $1$ and the rest are zeros $\left( n\in N\right) .$

If we define the so-called generalized number system based on $m$ in the
following way: $M_{0}:=1,$ $M_{k+1}:=m_{k}M_{k}$ $\left( k\in N\right) ,$
then every $n$ $\in $ $N$ can be uniquely expressed as $n=\sum%
\limits_{j=0}^{\infty }n_{j}M_{j},$ where $n_{j}$ $\in \ Z_{mj}$ $\left(
j\in N_{+}\right) $ and only a finite number of $n_{j}$'s differ from zero.
We use the following notation. Let $\left\vert n\right\vert :=$max$\{k\in
N:n_{k}\neq 0\}$ (that is , $M_{|n|}\leq n<M_{|n|+1}$). Next, we introduce
of $G_{m}$ an orthonormal system which is called Vilenkin system. At first
define the complex valued functions $r_{k}\left( x\right) :G_{m}\rightarrow
C.$ the generalized Rademacher functions in this way 
\begin{equation*}
r_{k}(x):=\exp \left( \frac{2\pi ix_{k}}{m_{k}}\right) ,\text{\ \ \ \ \ \ }%
\left( i^{2}=-1,\text{ }x\in G_{m},\text{ }k\text{ }\in \text{ }N\right) .
\end{equation*}

Now define the Vilenkin system $\psi :=\left( \psi _{n}:n\in N\right) $ on $%
G_{m}$ as follows.

\begin{equation*}
\psi _{n}\left( x\right) :=\prod\limits_{k=0}^{\infty }r_{k}^{n_{k}}\left(
x\right) ,\text{\ \ \ \ \ \ \ }\left( n\text{ }\epsilon \text{ }N\right) .
\end{equation*}

In particular, we call the system the Walsh-Paley if $m=2.$ The Vilenkin
system is orthonormal and complete in \ $L^{1}\left( G_{m}\right) .$ Now,
introduce analogues of the usual definitions of the Fourier analysis. If $f$ 
$\in $ $L^{1}\left( G_{m}\right) $ we can establish the following
definitions in the usual way:

Fourier coefficients:

\begin{equation*}
\widehat{f}\left( k\right) :=\int\limits_{G_{m}}f\overline{\psi _{k}}d\mu ,\
\ \ \ \ \ \left( k\text{ }\in N\right) ,
\end{equation*}

partial sums:

\begin{equation*}
S_{n}f:=\sum\limits_{k=0}^{n-1}\widehat{f}\left( k\right) \psi _{k},\ \ \ \
\left( n\text{ }\in \text{ }N_{+}\text{ },\text{ }S_{0}f:=0\right) ,
\end{equation*}

Fej\'{e}r means:

\begin{equation*}
\sigma _{n}f:=\sum\limits_{k=0}^{n}\left( 1-\frac{k}{n+1}\right) \ \widehat{f%
}\left( k\right) \psi _{k},\ \ \ \ \left( n\text{ }\epsilon \text{ }%
N_{+}\right) .
\end{equation*}

Dirichlet kernels:

\begin{equation*}
D_{n}:=\sum\limits_{k=0}^{n-1}\psi _{k},\ \ \ \ \ \left( n\text{ }\in
N_{+}\right) .
\end{equation*}

Fej\'{e}r kernels:

\begin{equation*}
K_{n}(x):=\frac{1}{n}\sum\limits_{k=1}^{n}D_{k}\left( x\right) .
\end{equation*}

Recall that (see \cite{Gol} or \cite{Sw})

\begin{equation}
\quad \hspace*{0in}D_{M_{n}}\left( x\right) =\left\{ 
\begin{array}{l}
\text{ }M_{n},\text{\thinspace \thinspace \thinspace \thinspace if\thinspace
\thinspace }x\in I_{n}, \\ 
\text{ }0,\text{\thinspace \thinspace \thinspace \thinspace \thinspace if
\thinspace \thinspace }x\notin I_{n}.%
\end{array}%
\right.  \label{for1}
\end{equation}

It is well Known that

\begin{equation*}
\sigma _{n}f(x)=\int\limits_{G_{m}}f\left( t\right) K_{n}\left( x-t\right)
d\mu \left( t\right) .
\end{equation*}

The $(C,-\alpha )$ means of the Vilenkin-Fourier series are defined as

\begin{equation*}
\sigma _{n}^{-\alpha }\left( f,x\right) =\frac{1}{A_{n}^{-\alpha }}%
\sum\limits_{k=0}^{n}A_{n-k}^{-\alpha }\widehat{f}\left( k\right) \psi
_{k}\left( x\right) ,
\end{equation*}%
where

\begin{equation*}
A_{0}^{\alpha }=1,\ \ \ \ \ \ \ A_{n}^{\alpha }=\frac{\left( \alpha
+1\right) ...\left( \alpha +n\right) }{n!}.
\end{equation*}

It is well Known that \cite{Zy}

\begin{equation}
A_{n}^{\alpha }=\sum\limits_{k=0}^{n}A_{k}^{\alpha -1}.\text{ \ \ \ \ \ \ }
\label{for2}
\end{equation}

\begin{equation}
A_{n}^{\alpha }-A_{n-1}^{\alpha }=A_{n}^{\alpha -1}.\ \ \ \ \ \ \ 
\label{for3}
\end{equation}

\begin{equation}
A_{n}^{\alpha }\sim n^{\alpha }.\ \ \ \ \ \ \   \label{for4}
\end{equation}

The norm of the space $L^{p}\left( G_{m}\right) $ is defined by

\begin{equation*}
\left\Vert f\right\Vert _{p}:=\left( \int\limits_{G_{m}}\left\vert f\left(
x\right) \right\vert ^{p}d\mu \left( x\right) \right) ^{1/p},\ \ \ \left(
1\leq p<\infty \right) .
\end{equation*}

Denote by $C\left( G_{m}\right) $ the class of continuous functions on the
group $G_{m}$, endoved with the supremum norm.

For the sake of brevity in notation, we agree to write $L^{\infty }\left(
G_{m}\right) $ instead of $C\left( G_{m}\right) .$

\bigskip Let $f\in L^{p}\left( G_{m}\right) ,1\leq p\leq \infty .$ The
expression 
\begin{equation*}
\omega \left( \frac{1}{M_{n}},f\right) _{p}=\sup_{h\in I_{n}}\left\Vert
f\left( \cdot +h\right) -f\left( \cdot \right) \right\Vert _{p}
\end{equation*}%
is called the modulus of continuity.

The problems of summability of partial sums and Ces\`{a}ro \ means for
Walsh-Fourier series were studied in \cite{Fi}, \cite{GoJAT}-\cite{Su},\cite%
{Tev}. In his monography \cite{Zh} Zhizhinashvili investigated the behavior
of Ces\`{a}ro method of negative order for trigonometric Fourier series in
detail. Goginava \cite{GoJAT} studied analogical question in case fo the
Walsh system. In particular, the following theorem is proved.

\begin{Go}
\cite{GoJAT}Let $f$ belong to $L^{p}\left( G_{2}\right) $ for some $p$ $\in $
$\left[ 1,\infty \right] $ and \ $\alpha $ $\in $ $\left( 0,1\right) $. Then
for any \ $2^{k}\leq n<2^{k+1}$ $(k,n\in N)$\ the inequality
\end{Go}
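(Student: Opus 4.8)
The plan is to represent $\sigma_n^{-\alpha}(f)$ as a convolution and reduce the whole question to an $L^1$-estimate of the associated kernel over dyadic shells. Since the multiplier of $\sigma_n^{-\alpha}$ at frequency $0$ equals $1$, the means reproduce constants, so one may write
\[
\sigma_n^{-\alpha}(f,x)-f(x)=\int_{G_m}\left(f(x-t)-f(x)\right)K_n^{-\alpha}(t)\,d\mu(t),
\]
where $K_n^{-\alpha}(t)=\frac{1}{A_n^{-\alpha}}\sum_{k=0}^{n}A_{n-k}^{-\alpha}\psi_k(t)$ is the kernel of the $(C,-\alpha)$ means. Applying the integral form of Minkowski's inequality immediately gives
\[
\left\|\sigma_n^{-\alpha}(f)-f\right\|_p\le\int_{G_m}\left\|f(\cdot-t)-f(\cdot)\right\|_p\,\left|K_n^{-\alpha}(t)\right|\,d\mu(t),
\]
which isolates the modulus of continuity and leaves a purely kernel-theoretic problem.

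Next I would rewrite the kernel through Dirichlet kernels. Summation by parts, using $\psi_k=D_{k+1}-D_k$ (with $D_0=0$) together with the identity $A_j^{-\alpha}-A_{j-1}^{-\alpha}=A_j^{-\alpha-1}$ from \eqref{for3}, yields
\[
A_n^{-\alpha}K_n^{-\alpha}=D_{n+1}+\sum_{k=1}^{n}A_{n-k+1}^{-\alpha-1}D_k.
\]
By \eqref{for4} the weights satisfy $|A_j^{-\alpha-1}|\sim j^{-\alpha-1}$, so the kernel is controlled by a decaying weighted sum of Dirichlet kernels, and the normalization $A_n^{-\alpha}\sim n^{-\alpha}$ from \eqref{for4} supplies the correct power of $n$ in front.

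The crux is to estimate $\int_{I_s\setminus I_{s+1}}\left|K_n^{-\alpha}(t)\right|\,d\mu(t)$ on each dyadic shell $I_s\setminus I_{s+1}$. Here I would use the closed form \eqref{for1} for $D_{2^j}$, which concentrates each such Dirichlet kernel on $I_j$ with value $2^j$, together with the standard expansion of a general $D_k$ in terms of the $D_{2^j}$ and the Walsh characters. Splitting the shell index $s$ against the level $k$ (where $2^k\le n<2^{k+1}$) and carrying out the summation over the weights $A_{n-k+1}^{-\alpha-1}$ produces, on each shell, a bound proportional to $2^{s(1-\alpha)}/n^{1-\alpha}$ up to the relevant constants. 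Feeding these shell bounds back into the Minkowski estimate, and using $\left\|f(\cdot-t)-f(\cdot)\right\|_p\le\omega\left(1/2^{s},f\right)_p$ for $t\in I_s$, then assembles the asserted inequality as a weighted dyadic sum of moduli of continuity.

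The main obstacle is exactly this third step. In contrast with the Fej\'er (positive-order) kernel, the kernel $K_n^{-\alpha}$ changes sign --- the coefficients $A_j^{-\alpha-1}$ are negative for $\alpha\in(0,1)$ --- so one cannot exploit positivity or the bare identity $\int_{G_m}K_n^{-\alpha}\,d\mu=1$. The delicate point is to quantify the cancellation among the oscillating Dirichlet kernels on each shell, tracking both the signs and the $j^{-\alpha-1}$ decay of the weights, so that the resulting $L^1$-localization estimate is sharp enough to yield convergence of $\sigma_n^{-\alpha}(f)$ to $f$ in the $L^p$-metric.
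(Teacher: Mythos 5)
Your proposal breaks down exactly at the step you yourself flag as ``the main obstacle,'' and the failure is not a technicality: the shell estimate you conjecture is false, and with it the whole strategy collapses. If one had $\int_{I_s\setminus I_{s+1}}\left\vert K_n^{-\alpha}\right\vert d\mu \le c\,2^{s(1-\alpha)}/n^{1-\alpha}$ for the shells with $s\le k$ (the finer shells being covered by the trivial bound $\left\vert K_n^{-\alpha}\right\vert\le cn$ times measure $2^{-s}$), then summing over $s$ would give $\sup_n\Vert K_n^{-\alpha}\Vert_1<\infty$, i.e. uniform boundedness of the operators $\sigma_n^{-\alpha}$ on $C(G_2)$. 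Since $\sigma_n^{-\alpha}\psi_j\rightarrow\psi_j$ for each fixed $j$ and polynomials are dense, every continuous function would then be uniformly $(C,-\alpha)$-summable; and since $S_{n+1}f=\sum_{k=0}^{n}A_{n-k}^{\alpha-1}A_k^{-\alpha}\sigma_k^{-\alpha}f$ with nonnegative coefficients summing to $1$, this would force uniform convergence of the Walsh--Fourier partial sums of every continuous function, contradicting the unboundedness of the Lebesgue constants $\Vert D_n\Vert_1\sim\log n$. In truth $\Vert K_n^{-\alpha}\Vert_1$ grows like $n^{\alpha}$, and the kernel's mass sits on the \emph{coarse} shells (this is visible in the paper's estimates \eqref{for14}--\eqref{for19}, which give shell mass of order $n^{\alpha}M_A^{-\alpha}$, largest for small $A$). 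Consequently any argument that begins, as yours does, by applying Minkowski to $\left\vert K_n^{-\alpha}(t)\right\vert\,\Vert f(\cdot-t)-f(\cdot)\Vert_p$ pairs the large coarse-shell mass with the \emph{largest} moduli $\omega(1/2^s,f)_p$ and produces a bound of order $n^{\alpha}$ even for very smooth $f$; it can never yield the stated inequality, whose whole point is that only $\omega(1/2^{k-1},f)_p$ gets multiplied by $2^{k\alpha}$.

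The missing idea --- the heart of the paper's proof of Theorem \ref{T2}, which contains Theorem G as the case $m\equiv 2$ --- is that the decisive cancellation is not inside the kernel on each shell but \emph{between the kernel and the function}, and it is harvested before Minkowski, not after. One splits the frequency range into the blocks $[0,M_{k-1})$, $[M_{k-1},M_k)$, $[M_k,n]$ and, inside each block attached to the scale $M_r$, replaces $f(\cdot+u)-f(\cdot)$ by $f(\cdot+u)-S_{M_r}(\cdot+u,f)$; the discarded terms contribute exactly zero by orthogonality of the system (this is \eqref{for9.1}, \eqref{for9.2} in Lemma \ref{L3} and \eqref{for11.1}, \eqref{for12} in Lemma \ref{L4}). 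Only then is the generalized Minkowski inequality applied, so the modulus enters at the frequency-block scale, via $\Vert f-S_{M_r}f\Vert_p\le\omega(1/M_r,f)_p$, \emph{uniformly in the shift} $u$, rather than at the spatial-shell scale. After this reduction the kernel estimates that are actually needed are: for the low block, the Agaev--Vilenkin--Dzhafarli--Rubinshtejn bound of Lemma \ref{L1}, an $L^1$ estimate of weighted Dirichlet sums by Cauchy--Schwarz which your pointwise bound $\left\vert D_v\right\vert\le v$ cannot replace; and for the middle and high blocks, the Abel-transformation shell estimates \eqref{for14}--\eqref{for18}, giving $\int_{G_m}\left\vert\sum_{v=M_{k-1}}^{M_k-1}A_{n-v}^{-\alpha}\psi_v\right\vert d\mu\le c(\alpha)$ and hence the term $M_k^{\alpha}\omega(1/M_{k-1},f)_p$. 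Your step 2 (summation by parts via \eqref{for3}, turning the kernel into Dirichlet kernels) is correct and does occur in the paper, but it is deployed after the orthogonality reduction, not before; as the opening move of a pure kernel-$L^1$ argument it cannot succeed.
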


\begin{equation*}
\left\Vert \sigma _{n}^{-\alpha }\left( f\right) -f\right\Vert _{p}\leq
c\left( p,\alpha \right) \left\{ 2^{k\alpha }\omega \left(
1/2^{k-1},f\right) _{p}+\sum\limits_{r=0}^{k-2}2^{r-k}\omega \left(
1/2^{r},f\right) _{p}\right\}
\end{equation*}

holds true.

In this paper we establish approximation properties of Ces\`{a}ro $%
(C,-\alpha )$ means with $\alpha $ $\epsilon $ $(0,1)$ of Vilenkin-Fourier
series.

\begin{theorem}
\label{T2}Let $f$ belong to $L^{p}\left( G_{m}\right) $ for some $p$ $\in $ $%
\left[ 1,\infty \right] $ and \ $\alpha $ $\in $ $\left( 0,1\right) $. Then
for any \ $M_{k}\leq n<M_{k+1}$ $(k,n\in N)$\ the inequality
\end{theorem}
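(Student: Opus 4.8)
The asserted inequality is the Vilenkin analogue of Theorem~G, namely
\begin{equation*}
\left\Vert \sigma _{n}^{-\alpha }\left( f\right) -f\right\Vert _{p}\leq c\left( p,\alpha \right) \left\{ M_{k}^{\alpha }\,\omega \left( 1/M_{k-1},f\right) _{p}+\sum_{r=0}^{k-2}\frac{M_{r}}{M_{k}}\,\omega \left( 1/M_{r},f\right) _{p}\right\} ,
\end{equation*}
and the plan is to pass to the Ces\`{a}ro kernel and estimate it ring by ring. First I would write the means as a convolution and record the Dirichlet--kernel form of the $(C,-\alpha )$ kernel,
\begin{equation*}
\sigma _{n}^{-\alpha }\left( f,x\right) =\int_{G_{m}}f\left( x-t\right) K_{n}^{-\alpha }\left( t\right) \,d\mu \left( t\right) ,\qquad K_{n}^{-\alpha }\left( t\right) =\frac{1}{A_{n}^{-\alpha }}\sum_{j=0}^{n}A_{n-j}^{-\alpha -1}D_{j+1}\left( t\right) ,
\end{equation*}
the second identity being obtained by summation by parts, since by \eqref{for2} one has $\sum_{j=k}^{n}A_{n-j}^{-\alpha -1}=A_{n-k}^{-\alpha }$. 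Because $\sigma _{n}^{-\alpha }$ reproduces constants, $\int_{G_{m}}K_{n}^{-\alpha }\,d\mu =\tfrac{1}{A_{n}^{-\alpha }}A_{n}^{-\alpha }=1$, whence
\begin{equation*}
\sigma _{n}^{-\alpha }\left( f,x\right) -f\left( x\right) =\int_{G_{m}}\left( f\left( x-t\right) -f\left( x\right) \right) K_{n}^{-\alpha }\left( t\right) \,d\mu \left( t\right) ,
\end{equation*}
and the generalized Minkowski inequality gives
\begin{equation*}
\left\Vert \sigma _{n}^{-\alpha }\left( f\right) -f\right\Vert _{p}\leq \int_{G_{m}}\left\Vert f\left( \cdot -t\right) -f\left( \cdot \right) \right\Vert _{p}\left\vert K_{n}^{-\alpha }\left( t\right) \right\vert \,d\mu \left( t\right) .
\end{equation*}

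Next I would dissect $G_{m}=I_{k-1}\cup \bigcup_{r=0}^{k-2}\left( I_{r}\setminus I_{r+1}\right) $. For $t\in I_{s}$ the definition of the modulus gives $\|f(\cdot -t)-f\|_{p}\leq \omega (1/M_{s},f)_{p}$, and since $\omega (1/M_{s},f)_{p}$ is nonincreasing in $s$, the contribution of $I_{k-1}$ is at most $\omega (1/M_{k-1},f)_{p}\int_{I_{k-1}}|K_{n}^{-\alpha }|\,d\mu $, while that of each ring $I_{r}\setminus I_{r+1}$ with $r\leq k-2$ is at most $\omega (1/M_{r},f)_{p}\int_{I_{r}\setminus I_{r+1}}|K_{n}^{-\alpha }|\,d\mu $. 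Thus the theorem reduces to the two kernel estimates
\begin{equation*}
\int_{I_{k-1}}\left\vert K_{n}^{-\alpha }\left( t\right) \right\vert \,d\mu \left( t\right) \leq cM_{k}^{\alpha },\qquad \int_{I_{r}\setminus I_{r+1}}\left\vert K_{n}^{-\alpha }\left( t\right) \right\vert \,d\mu \left( t\right) \leq c\,\frac{M_{r}}{M_{k}}\quad (0\leq r\leq k-2).
\end{equation*}

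To establish these I would use $A_{n}^{-\alpha }\sim M_{k}^{-\alpha }$ (so that $1/A_{n}^{-\alpha }\sim M_{k}^{\alpha }$, which already accounts for the factor $M_{k}^{\alpha }$) together with $|A_{m}^{-\alpha -1}|\sim m^{-\alpha -1}$, both coming from \eqref{for4}, and I would expand each $D_{j+1}$ into its $M$-adic blocks. On a ring $I_{r}\setminus I_{r+1}$ the localization $D_{M_{j}}(t)=M_{j}\mathbf{1}_{I_{j}}(t)$ of \eqref{for1} annihilates every block of index $>r$, and a further Abel transformation in $j$ converts the decay and sign changes of the coefficients $A_{n-j}^{-\alpha -1}$ into the gain $M_{r}/M_{k}$. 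The main obstacle is precisely this last estimate: controlling $\int_{I_{r}\setminus I_{r+1}}|\sum_{j}A_{n-j}^{-\alpha -1}D_{j+1}|\,d\mu $ uniformly for $M_{k}\leq n<M_{k+1}$, since the coefficients change sign and the Vilenkin Dirichlet kernels admit no uniform integrable majorant; the necessary cancellation must be extracted from the summation by parts combined with the sharp localization of the $D_{j+1}$, and it is here that the bounded-Vilenkin hypothesis (ensuring $M_{r+1}/M_{r}=m_{r}$ stays bounded) enters.
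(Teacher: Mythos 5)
Your reduction contains a genuine, and in fact unrepairable, gap: of the two kernel estimates to which you reduce the theorem, the second one is false, and you prove neither (you yourself flag the ring estimate as ``the main obstacle''). The kernel $K_{n}^{-\alpha }=\frac{1}{A_{n}^{-\alpha }}\sum_{j=0}^{n}A_{n-j}^{-\alpha -1}D_{j+1}$ contains the term $\frac{1}{A_{n}^{-\alpha }}A_{0}^{-\alpha -1}D_{n+1}$, whose weight $1/A_{n}^{-\alpha }\sim n^{\alpha }$ is spread over \emph{every} ring $I_{r}\setminus I_{r+1}$, not only over $I_{k-1}$; no summation by parts can remove it, since apart from this $j=n$ term the coefficients $A_{n-j}^{-\alpha -1}$ do not change sign at all ($A_{m}^{-\alpha -1}<0$ for all $m\geq 1$), so the cancellation you hope for is simply not there. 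Concretely, take the Walsh case $m_{j}\equiv 2$ and $n$ even, $M_{k}\leq n<M_{k+1}$: on $I_{0}\setminus I_{1}$ one has $D_{j+1}=0$ for $j$ odd and $D_{j+1}=\psi _{j}$ for $j$ even, while $\int_{I_{0}\setminus I_{1}}\psi _{j}\psi _{n}\,d\mu =\frac{1}{2}\delta _{jn}$ for $j,n$ even; hence
\begin{equation*}
\int_{I_{0}\setminus I_{1}}\left\vert K_{n}^{-\alpha }\right\vert d\mu \geq \left\vert \int_{I_{0}\setminus I_{1}}K_{n}^{-\alpha }\psi _{n}\,d\mu \right\vert =\frac{A_{0}^{-\alpha -1}}{2A_{n}^{-\alpha }}\geq c\left( \alpha \right) n^{\alpha },
\end{equation*}
which exceeds your claimed bound $c\,M_{0}/M_{k}$ by a factor of order $n^{1+\alpha }$ (testing against $\psi _{n-1}$ handles odd $n$, and the same phenomenon occurs on every ring $I_{r}\setminus I_{r+1}$, $r\leq k-2$). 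This kills the strategy itself, not merely its execution: once you apply generalized Minkowski and put an absolute value on the full kernel, the bound your method produces is at least $\omega \left( 1,f\right) _{p}\int_{I_{0}\setminus I_{1}}\left\vert K_{n}^{-\alpha }\right\vert d\mu \geq c\left( \alpha \right) n^{\alpha }\omega \left( 1,f\right) _{p}$, which does not even tend to zero for nonconstant $f$. The absolute value destroys exactly the cancellation that makes the theorem true. (Your first estimate, $\int_{I_{k-1}}\left\vert K_{n}^{-\alpha }\right\vert d\mu \leq cM_{k}^{\alpha }$, is correct, but useless by itself.)

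The cancellation must be harvested on the frequency side, before any modulus is put on a kernel, and that is what the paper does. It splits the multiplier sum at $M_{k-1}$, $M_{k}$ and $n$ (Lemmas \ref{L3} and \ref{L4}); the high-frequency pieces are paired with $f-S_{M_{k-1}}f$ via the exact orthogonality identity (\ref{for11.1}), so the unavoidable loss $M_{k}^{\alpha }$ ends up multiplying $\omega \left( 1/M_{k-1},f\right) _{p}$ rather than $\omega \left( 1,f\right) _{p}$ --- only \emph{after} this pairing does the paper do a ring-by-ring kernel analysis, (\ref{for14})--(\ref{for19}), and only for the block kernel $\sum_{v=M_{k-1}}^{M_{k}-1}A_{n-v}^{-\alpha }\psi _{v}$, which (unlike the full kernel) does satisfy good ring bounds. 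In the low-frequency piece the paper inserts $S_{M_{r}}f$ block by block (the identity (\ref{for9.1}) makes the middle term vanish), so that what remains is $\left\Vert f-S_{M_{r}}f\right\Vert _{p}\leq \omega \left( 1/M_{r},f\right) _{p}$ times the $L^{1}$ norm over the \emph{whole group} of $\sum_{v=M_{r}}^{M_{r+1}-1}A_{n-v-1}^{-\alpha -1}D_{v}$; that norm is estimated by the Cauchy--Schwarz/orthogonality Lemma \ref{L1}, giving the factor $M_{r+1}\left( n-M_{r+1}\right) ^{-\alpha -1}$ and hence $M_{r}/M_{k}$ after multiplication by $n^{\alpha }$. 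If you want to keep your convolution picture, you must at minimum replace $f\left( \cdot -t\right) -f\left( \cdot \right) $ by differences of the form $f-S_{M_{r}}f$ inside each block before taking absolute values; that is precisely the paper's device, and there is no way around it.
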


\begin{equation*}
\left\Vert \sigma _{n}^{-\alpha }\left( f\right) -f\right\Vert _{p}\leq
c\left( p,\alpha \right) \left\{ M_{k}^{\alpha }\omega \left(
1/M_{k-1},f\right) _{p}+\sum\limits_{r=0}^{k-2}\frac{M_{r}}{M_{k}}\omega
\left( 1/M_{r},f\right) _{p}\right\}
\end{equation*}

holds true.

This result allows one to obtain the condition which is sufficient for the
convergence of the means $\sigma _{n}^{-\alpha }(f,x)$ to $f(x)$ in the $%
L^{p}-$metric.

\begin{corollary}
\label{C1}Let $f$\ belong to $L^{p}\left( G_{m}\right) $ for some $p$ $\in $ 
$\left[ 1,+\infty \right] $ and let
\end{corollary}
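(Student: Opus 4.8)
The plan is to read off the corollary from Theorem~\ref{T2} by showing that its right-hand side tends to $0$ under the natural hypothesis $\omega(1/M_k,f)_p=o(M_k^{-\alpha})$ as $k\to\infty$. Because $M_k\le n<M_{k+1}$, the limit $n\to\infty$ is the same as $k\to\infty$, so it is enough to prove that the two summands
\[
A_k:=M_k^{\alpha}\,\omega(1/M_{k-1},f)_p,\qquad B_k:=\sum_{r=0}^{k-2}\frac{M_r}{M_k}\,\omega(1/M_r,f)_p
\]
both go to $0$.

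The first summand is immediate. Boundedness of the group gives an integer $M$ with $2\le m_j\le M$ for all $j$; since $M_k=m_{k-1}M_{k-1}$ this yields $M_k^{\alpha}\le M^{\alpha}M_{k-1}^{\alpha}$, so $A_k\le M^{\alpha}\bigl(M_{k-1}^{\alpha}\omega(1/M_{k-1},f)_p\bigr)$. The hypothesis $\omega(1/M_{k-1},f)_p=o(M_{k-1}^{-\alpha})$ then forces $A_k\to 0$. This step is routine and uses the boundedness of $m$ only to absorb the index shift from $M_k$ to $M_{k-1}$.

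The real work is in $B_k$, which is a weighted average of the numbers $a_r:=\omega(1/M_r,f)_p$ with weights $w_{k,r}:=M_r/M_k$. I would first record that, since $M_k=M_r\prod_{j=r}^{k-1}m_j$ with each $m_j\ge 2$, one has $w_{k,r}=\prod_{j=r}^{k-1}m_j^{-1}\le 2^{-(k-r)}$; hence the row sums are uniformly bounded, $\sum_{r=0}^{k-2}w_{k,r}\le\sum_{s\ge 2}2^{-s}=\tfrac12$, while for each fixed $r$ we have $w_{k,r}\to 0$ as $k\to\infty$ because $M_k\to\infty$. On the other side $a_r\to 0$ as $r\to\infty$ for every $f\in L^p(G_m)$, by continuity of translation in $L^p$ for $1\le p<\infty$ and by uniform continuity of $f\in C(G_m)$ for $p=\infty$; no extra hypothesis is needed here. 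Given $\varepsilon>0$, pick $R$ with $a_r<\varepsilon$ for $r\ge R$ and split $B_k$ at $R$: the tail is at most $\varepsilon\sum_{r\ge R}w_{k,r}\le\varepsilon/2$, and the head $\sum_{r<R}w_{k,r}a_r$ is a finite sum each of whose terms vanishes as $k\to\infty$. This Toeplitz-type argument gives $B_k\to 0$.

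Combining the two limits with the estimate of Theorem~\ref{T2} yields $\|\sigma_n^{-\alpha}(f)-f\|_p\to 0$, which is the conclusion of the corollary. The only genuine obstacle is $B_k$, and it is handled not by strengthening the hypothesis but by noting that the geometric decay of $M_r/M_k$ coming from $m_j\ge 2$, together with the automatic vanishing of $\omega(1/M_r,f)_p$, makes $B_k$ a regular weighted mean of a null sequence; the hypothesis $\omega(1/M_k,f)_p=o(M_k^{-\alpha})$ is thus required solely to control the $\alpha$-sensitive first term $A_k$.
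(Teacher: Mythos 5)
Your proposal is correct and is exactly the intended deduction: the paper states Corollary~\ref{C1} without an explicit proof, as an immediate consequence of Theorem~\ref{T2}, and your write-up fills in the natural details — the hypothesis is literally that the first term $M_{k}^{\alpha}\omega\left(1/M_{k-1},f\right)_{p}$ vanishes (so your index shift via the boundedness of $m$ is harmless but unnecessary), while your Toeplitz-type splitting of the second sum, using $M_{r}/M_{k}\leq 2^{-(k-r)}$ from $m_{j}\geq 2$ together with the automatic convergence $\omega\left(1/M_{r},f\right)_{p}\rightarrow 0$ (continuity of translation for $1\leq p<\infty$, uniform continuity for $p=\infty$ under the paper's convention $L^{\infty}=C(G_{m})$), is the standard way to dispose of it for arbitrary $f$. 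Your closing observation — that the hypothesis is needed only for the $\alpha$-sensitive first term — is accurate and makes explicit what the paper leaves implicit.
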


\begin{equation*}
M_{k}^{\alpha }\omega \left( 1/M_{k-1},f\right) _{p}\rightarrow 0,\ \ \ \ \
as\ \ \ \ \ k\rightarrow \infty ,\ \ \ \ \ \left( 0<\alpha <1\right) .
\end{equation*}

Then

\begin{equation*}
\left\Vert \sigma _{n}^{-\alpha }\left( f\right) -f\right\Vert
_{p}\rightarrow 0\ \ \ \ \ as\ \ \ \ \ n\rightarrow \infty .
\end{equation*}

In order to prove Theorem \ref{T2} we need the following lemmas.

\begin{lemma}
\label{L1}\cite{AVDR}Let $\alpha _{1},...,\alpha _{n}$ be real numbers.Then
\end{lemma}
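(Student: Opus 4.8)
The plan is to read the (quoted) conclusion as the Abel summation-by-parts identity that converts a linear combination of Dirichlet kernels into a linear combination of Fej\'er kernels, namely
\begin{equation*}
\sum_{k=1}^{n}\alpha_k D_k=\sum_{k=1}^{n-1}\left(\alpha_k-\alpha_{k+1}\right)kK_k+\alpha_n nK_n,
\end{equation*}
and to establish it by a single application of Abel's transformation. The only structural input needed is the passage from Fej\'er to Dirichlet kernels. Since $K_n=\frac{1}{n}\sum_{k=1}^{n}D_k$ by definition, we have $nK_n=\sum_{k=1}^{n}D_k$; subtracting the same identity with $n$ replaced by $n-1$ gives $D_n=nK_n-(n-1)K_{n-1}$ for every $n\geq1$, under the convention $K_0:=0$.

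First I would introduce $a_k:=kK_k$, so that $a_0=0$ and the relation above reads simply $D_k=a_k-a_{k-1}$. Substituting this into the left-hand side turns the target sum into $\sum_{k=1}^{n}\alpha_k\left(a_k-a_{k-1}\right)$, which is precisely the shape to which Abel's transformation applies. Carrying out the summation by parts, shifting the index in the term $\sum\alpha_k a_{k-1}$, and discarding the vanishing contribution $\alpha_1 a_0$ yields
\begin{equation*}
\sum_{k=1}^{n}\alpha_k\left(a_k-a_{k-1}\right)=\alpha_n a_n+\sum_{k=1}^{n-1}\left(\alpha_k-\alpha_{k+1}\right)a_k,
\end{equation*}
and restoring $a_k=kK_k$ gives exactly the claimed identity.

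The argument is purely algebraic, so I do not anticipate a genuine obstacle; the only care required is the boundary bookkeeping (the term at $k=0$ dropping out because $K_0=0$, and the separate top term $\alpha_n nK_n$) and the correct index shift producing the difference $\alpha_k-\alpha_{k+1}$. The reason this lemma is the right tool for Theorem \ref{T2} is that, once applied with the Ces\`aro coefficients playing the role of the $\alpha_k$, it re-expresses the $(C,-\alpha)$ kernel as a weighted sum of Fej\'er kernels $kK_k$; the weights are differences of the numbers $A_j^{-\alpha}$, which are governed by \eqref{for3} and \eqref{for4}, and it is through these well-behaved weights together with the known estimates for $K_k$ that the subsequent norm bounds in terms of the modulus of continuity will be obtained.
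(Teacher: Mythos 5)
You have proved a different statement from the one the lemma asserts. Because of the paper's formatting, the conclusion of Lemma \ref{L1} is the display that follows it in the source: for arbitrary real numbers $\alpha_{1},\dots,\alpha_{n}$,
\begin{equation*}
\frac{1}{n}\int_{G}\left\vert \sum_{k=1}^{n}\alpha_{k}D_{k}(x)\right\vert d\mu (x)\leq \frac{c}{\sqrt{n}}\left( \sum_{k=1}^{n}\alpha_{k}^{2}\right)^{1/2},
\end{equation*}
an $L^{1}$ estimate with an absolute constant against the $\ell^{2}$ norm of the coefficients. Your summation-by-parts identity $\sum_{k=1}^{n}\alpha_{k}D_{k}=\sum_{k=1}^{n-1}(\alpha_{k}-\alpha_{k+1})kK_{k}+\alpha_{n}nK_{n}$ is correct algebra, but it is not this inequality, and the route you sketch cannot be converted into it. Estimating your right-hand side in $L^{1}$ via the uniform bound $\Vert K_{k}\Vert_{1}\leq c$ (valid on bounded Vilenkin groups) gives $c\sum_{k=1}^{n-1}k\vert \alpha_{k}-\alpha_{k+1}\vert +cn\vert \alpha_{n}\vert$, a bound governed by the \emph{variation} of the coefficients rather than their $\ell^{2}$ norm. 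For oscillating coefficients, say $\alpha_{k}=(-1)^{k}$, your bound is of order $n^{2}$, whereas the lemma asserts $\Vert \sum_{k=1}^{n}(-1)^{k}D_{k}\Vert_{1}\leq c\sqrt{n}\,(\sum_{k}\alpha_{k}^{2})^{1/2}=cn$. That missing factor $\sqrt{n}$ is precisely the content of the lemma, and it is essential where the lemma is used in (\ref{for8}): there it is applied to the block $\sum_{v=M_{r}}^{M_{r+1}-1}A_{n-v-1}^{-\alpha -1}D_{v}$, whose coefficients enter only through their $\ell^{2}$ norm, with no monotonicity or variation control available.

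Note also that the paper gives no proof of this lemma at all --- it is quoted from \cite{AVDR} --- so there is no internal argument to compare with; but as a freestanding attempt, yours misses the idea that actually produces the $n^{-1/2}$ gain, namely cancellation coming from the structure of the Vilenkin--Dirichlet kernels. The standard proof decomposes $G_{m}$ into the sets $I_{A}\setminus I_{A+1}$, uses that $D_{tM_{A+1}}$ vanishes there (by (\ref{for1})), so that $D_{tM_{A+1}+l}=\psi_{tM_{A+1}}D_{l}$ on that set, and then applies Cauchy--Schwarz together with the orthogonality in $t$ of the characters $\psi_{tM_{A+1}}$; summing the resulting bounds of order $M_{A}^{1/2}(\sum_{k}\alpha_{k}^{2})^{1/2}$ over $A$ yields the factor $\sqrt{n}$. (This is the same block-and-character trick the paper itself uses in (\ref{for14}).) None of this is captured by the Fej\'{e}r-kernel identity alone, so your argument does not prove Lemma \ref{L1}.
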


\begin{equation*}
\frac{1}{n}\int\limits_{G}\left\vert \sum\limits_{k=1}^{n}\alpha
_{k}D_{k}(x)\right\vert d\mu (x)\leq \frac{c}{\sqrt{n}}\left(
\sum\limits_{k=1}^{n}\alpha _{k}^{2}\right) ^{1/2}.
\end{equation*}

where $c$ is an absolute constant.

\begin{lemma}
\label{L3}Let $f$ $\in L^{p}(G_{m})$ for some $p$ $\in $ $\left[ 1,\infty %
\right] .$ Then for every $\alpha $ $\in $ $\left( 0,1\right) $ the
following estimations holds%
\begin{equation*}
\frac{1}{A_{n}^{-\alpha }}\left\Vert
\int\limits_{G_{m}}\sum\limits_{v=0}^{M_{k-1}-1}A_{n-v}^{-\alpha }\psi
_{v}\left( u\right) \left[ f\left( \cdot +u\right) -f\left( \cdot \right) %
\right] d\mu \left( u\right) \right\Vert _{p}
\end{equation*}%
\begin{equation*}
\leq c\left( p,\alpha \right) \sum\limits_{r=0}^{k-1}\frac{M_{r}}{M_{k}}%
\omega \left( 1/M_{k},f\right) _{p},
\end{equation*}
\end{lemma}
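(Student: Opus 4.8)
The plan is to turn the exponential sum in the kernel into a linear combination of Dirichlet kernels by Abel summation, and then to extract the required decay from the $L^{1}$-estimate of Lemma \ref{L1} together with the size estimates \eqref{for4}.

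Write $P(u):=\sum_{v=0}^{M_{k-1}-1}A_{n-v}^{-\alpha}\psi_{v}(u)$ for the kernel in question. Since $M_{k}\le n<M_{k+1}$ and, on a bounded Vilenkin group, $M_{k+1}\le cM_{k}$ and $M_{k-1}\le M_{k}/2$, every index appearing satisfies $M_{k}/2\le n-v\le cM_{k}$; hence by \eqref{for4} the coefficients $A_{n-v}^{-\alpha}$ are all comparable to $A_{n}^{-\alpha}\sim M_{k}^{-\alpha}$, so the normalising factor $1/A_{n}^{-\alpha}$ merely contributes a bounded multiple of $M_{k}^{\alpha}$. Performing summation by parts and using \eqref{for3} in the form $A_{n-w+1}^{-\alpha}-A_{n-w}^{-\alpha}=A_{n-w+1}^{-\alpha-1}$, I would rewrite
\[
P(u)=\sum_{w=1}^{M_{k-1}-1}A_{n-w+1}^{-\alpha-1}D_{w}(u)+A_{n-M_{k-1}+1}^{-\alpha}D_{M_{k-1}}(u).
\]
This reduces the problem to controlling the convolution of these Dirichlet-kernel sums with the difference $f(\cdot+u)-f(\cdot)$.

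Next I would bring the $L^{p}$-norm inside the integral by the generalized Minkowski inequality and split $G_{m}$ into the level sets $I_{r}\setminus I_{r+1}$, $r=0,\dots,k-2$, together with $I_{k-1}$. On $I_{r}\setminus I_{r+1}$ one has $u\in I_{r}$, so $\|f(\cdot+u)-f(\cdot)\|_{p}\le\omega(1/M_{r},f)_{p}$, and it remains to bound the $L^{1}$-mass of the kernel there. Grouping the indices $w$ into the blocks $M_{r}\le w<M_{r+1}$ and applying Lemma \ref{L1} to each block (with parameter $\sim M_{r+1}\sim M_{r}$ and coefficients of size $|A_{n-w+1}^{-\alpha-1}|\sim M_{k}^{-\alpha-1}$, of which there are $\sim M_{r}$), the square-root cancellation gives
\[
\int_{G_{m}}\Bigl|\sum_{M_{r}\le w<M_{r+1}}A_{n-w+1}^{-\alpha-1}D_{w}(u)\Bigr|\,d\mu(u)\le c\,\frac{M_{r}}{M_{k}}\,M_{k}^{-\alpha}\sim c\,\frac{M_{r}}{M_{k}}\,A_{n}^{-\alpha}.
\]
After dividing by $A_{n}^{-\alpha}$ and pairing with the modulus of continuity, block $r$ contributes a term of size $\frac{M_{r}}{M_{k}}\,\omega(1/M_{k},f)_{p}$, and summing over $r=0,\dots,k-1$ produces the stated bound. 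The boundary term carrying $D_{M_{k-1}}$ is treated separately: by \eqref{for1} $D_{M_{k-1}}=M_{k-1}\mathbf 1_{I_{k-1}}$, so its convolution with $f(\cdot+u)-f(\cdot)$ equals $S_{M_{k-1}}f-f$ up to the bounded factor $A_{n-M_{k-1}+1}^{-\alpha}/A_{n}^{-\alpha}$, and this piece is absorbed into the top level $r=k-1$.

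The main obstacle is the interaction between Lemma \ref{L1}, which only yields a global $L^{1}$-bound for a Dirichlet-kernel sum, and the need to pair that kernel with the scale-dependent quantity $\|f(\cdot+u)-f(\cdot)\|_{p}$ at the correct scale. The square-root cancellation in Lemma \ref{L1} is precisely what converts the crude count $M_{r}\cdot M_{k}^{-\alpha-1}$ into the gain $M_{r}/M_{k}$; the delicate point is to organise the blockwise application so that this gain is genuinely realised while the modulus is kept at the scale dictated by the support of each block, and then to reconcile it with the scale $1/M_{k}$ appearing on the right-hand side. Once the blocks are correctly localised, summability of the factors $M_{r}/M_{k}$ is immediate, since on a bounded Vilenkin group $M_{r+1}/M_{r}$ is bounded above and below, so $\sum_{r=0}^{k-1}M_{r}/M_{k}$ converges geometrically.
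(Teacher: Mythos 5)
Your opening moves are sound and agree with the paper's: the Abel transformation via \eqref{for3} producing $\sum_{w=1}^{M_{k-1}-1}A_{n-w+1}^{-\alpha-1}D_{w}(u)$ plus the boundary term $A_{n-M_{k-1}+1}^{-\alpha}D_{M_{k-1}}(u)$, and the blockwise arithmetic with Lemma \ref{L1},
\[
\int_{G_{m}}\Bigl|\sum_{M_{r}\le w<M_{r+1}}A_{n-w+1}^{-\alpha-1}D_{w}(u)\Bigr|\,d\mu(u)\le c(\alpha)\,\sqrt{M_{r+1}}\,\sqrt{M_{r}}\,M_{k}^{-\alpha-1}\le c(\alpha)\,\frac{M_{r}}{M_{k}}\,M_{k}^{-\alpha},
\]
are exactly the paper's computation for its term $I_{11}$. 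But the core of your argument has a genuine gap, which your own final paragraph names without resolving: you cannot pair the gain $M_{r}/M_{k}$ with a modulus of continuity at a fine scale. Lemma \ref{L1} gives only a \emph{global} $L^{1}$ bound for each block kernel, and the kernels $D_{w}$ with $M_{r}\le w<M_{r+1}$ are not supported in $I_{r}$ (only $D_{M_{r}}$ is, by \eqref{for1}), so your frequency blocks do not match your spatial decomposition of $G_{m}$ into the sets $I_{r}\setminus I_{r+1}$. Concretely: if you pair the global block bound with $\sup_{u\in G_{m}}\left\Vert f(\cdot+u)-f(\cdot)\right\Vert _{p}$, you keep the factor $M_{r}/M_{k}$ but the modulus degenerates to $c\left\Vert f\right\Vert _{p}$; if instead you integrate over $I_{r}\setminus I_{r+1}$ to exploit $\left\Vert f(\cdot+u)-f(\cdot)\right\Vert _{p}\le\omega(1/M_{r},f)_{p}$, you only know $\int_{I_{r}\setminus I_{r+1}}|K|\,d\mu\le c$ for the full kernel $K$, so the factor $M_{r}/M_{k}$ is lost and you end up with $c\sum_{r}\omega(1/M_{r},f)_{p}$, which does not even tend to zero. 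Your concluding claim that ``block $r$ contributes a term of size $\frac{M_{r}}{M_{k}}\,\omega(1/M_{k},f)_{p}$'' therefore does not follow from any step you have written.

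The paper closes exactly this gap with an idea absent from your proposal: inside each block it inserts the partial sum $S_{M_{r}}f$, splitting $f(\cdot+u)-f(\cdot)$ into $\left[f-S_{M_{r}}f\right](\cdot+u)+\left[S_{M_{r}}(\cdot+u,f)-S_{M_{r}}(\cdot,f)\right]+\left[S_{M_{r}}f-f\right](\cdot)$. The first and third pieces need \emph{no localization in $u$ at all}, because $\left\Vert f-S_{M_{r}}f\right\Vert _{p}\le\omega(1/M_{r},f)_{p}$ is a norm bound uniform in the translation parameter; combined with Lemma \ref{L1} this yields the terms $I_{11}$ and $I_{13}$, each bounded by $c(\alpha)\sum_{r=0}^{k-2}\frac{M_{r}}{M_{k}}\omega(1/M_{r},f)_{p}$. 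The middle piece vanishes identically ($I_{12}=0$): for $v\ge M_{r}$ one has $\int_{G_{m}}S_{M_{r}}(x+u,f)D_{v}(u)\,d\mu(u)=S_{v}\left(x,S_{M_{r}}(f)\right)=S_{M_{r}}(x,f)$, so the two sums cancel term by term. This orthogonality step is precisely the mechanism that lets the scale-$r$ modulus travel with the block-$r$ kernel, and without it (or some substitute, such as a pointwise kernel estimate on $I_{A-1}\setminus I_{A}$ of the type the paper proves in Lemma \ref{L4}) your outline cannot be completed. A further caution: the boundary term yields $\omega(1/M_{k-1},f)_{p}$, as in the paper's $I_{2}$; it cannot be ``absorbed'' into $\frac{M_{k-1}}{M_{k}}\omega(1/M_{k},f)_{p}$, since $\omega(1/M_{k-1},f)_{p}$ may greatly exceed $\omega(1/M_{k},f)_{p}$ (consider $f=\psi_{M_{k-1}}$). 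For the same reason the factor $\omega(1/M_{k},f)_{p}$ printed inside the sum in the Lemma's statement should be read as $\omega(1/M_{r},f)_{p}$; that is what the paper's proof establishes and what the proof of Theorem \ref{T2} actually uses.
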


where $M_{k}\leq n\leq M_{k+1}.$

\begin{proof}[Proof of Lemma \protect\ref{L3}]
Applying Abel's transformation, from (\ref{for3}) we get

\begin{equation}
\text{\ }\frac{1}{A_{n}^{-\alpha }}\left\Vert
\int\limits_{G_{m}}\sum\limits_{v=0}^{M_{k-1}-1}A_{n-v}^{-\alpha }\psi
_{v}\left( u\right) \left[ f\left( \cdot +u\right) -f\left( \cdot \right) %
\right] d\mu \left( u\right) \right\Vert _{p}\text{\ \ \ }  \label{for5}
\end{equation}

\begin{equation*}
=\frac{1}{A_{n}^{-\alpha }}\left\Vert
\int\limits_{G_{m}}\sum\limits_{v=1}^{M_{k-1}}A_{n-v-1}^{-\alpha }\psi
_{v-1}\left( u\right) \left[ f\left( \cdot +u\right) -f\left( \cdot \right) %
\right] d\mu \left( u\right) \right\Vert _{p}\text{ \ \ }
\end{equation*}

\begin{equation*}
\leq \frac{1}{A_{n}^{-\alpha }}\left\Vert
\int\limits_{G_{m}}\sum\limits_{v=1}^{M_{k-1}-1}A_{n-v-1}^{-\alpha
-1}D_{v}\left( u\right) \left[ f\left( \cdot +u\right) -f\left( \cdot
\right) \right] d\mu \left( u\right) \right\Vert _{p}
\end{equation*}

\begin{equation*}
+\frac{1}{A_{n}^{-\alpha }}\left\Vert
\int\limits_{G_{m}}A_{n-M_{k-1}-1}^{-\alpha }D_{M_{k-1}}\left( u\right) %
\left[ f\left( \cdot +u\right) -f\left( \cdot \right) \right] d\mu \left(
u\right) \right\Vert _{p}
\end{equation*}%
\begin{equation*}
=I_{1}+I_{2}.
\end{equation*}

From the generalized Minkowski's inequality, and by (\ref{for1}) and (\ref%
{for4}) we obtain

\begin{equation}
I_{2}\leq c(\alpha )M_{k-1}\int\limits_{I_{k-1}}\left\Vert \left[ f\left(
\cdot +u\right) -f\left( \cdot \right) \right] \right\Vert _{p}d\mu \left(
u\right)  \label{for6}
\end{equation}

\begin{equation*}
=O\left( \omega \left( 1/M_{k-1},f\right) \right) _{p},
\end{equation*}

\begin{equation}
\text{\ \ }I_{1}\leq \frac{1}{A_{n}^{-\alpha }}\sum\limits_{r=0}^{k-2}\left%
\Vert \int\limits_{G_{m}}\sum\limits_{v=M_{r}}^{M_{r+1}-1}A_{n-v-1}^{-\alpha
-1}D_{v}\left( u\right) \left[ f\left( \cdot +u\right) -f\left( \cdot
\right) \right] d\mu \left( u\right) \right\Vert _{p}\text{ }  \label{for7}
\end{equation}

\begin{equation*}
\leq \frac{1}{A_{n}^{-\alpha }}\sum\limits_{r=0}^{k-2}\left\Vert
\int\limits_{G_{m}}\sum\limits_{v=M_{r}}^{M_{r+1}-1}A_{n-v-1}^{-\alpha
-1}D_{v}\left( u\right) \left[ f\left( \cdot +u\right) -S_{M_{r}}\left(
\cdot +u,f\right) \right] d\mu \left( u\right) \right\Vert _{p}
\end{equation*}

\begin{equation*}
+\frac{1}{A_{n}^{-\alpha }}\sum\limits_{r=0}^{k-2}\left\Vert
\int\limits_{G_{m}}\sum\limits_{v=M_{r}}^{M_{r+1}-1}A_{n-v-1}^{-\alpha
-1}D_{v}\left( u\right) \left[ S_{M_{r}}\left( \cdot +u,f\right)
-S_{M_{r}}\left( \cdot ,f\right) \right] d\mu \left( u\right) \right\Vert
_{p}
\end{equation*}

\begin{equation*}
+\frac{1}{A_{n}^{-\alpha }}\sum\limits_{r=0}^{k-2}\left\Vert
\int\limits_{G_{m}}\sum\limits_{v=M_{r}}^{M_{r+1}-1}A_{n-v-1}^{-\alpha
-1}D_{v}\left( u\right) [S_{M_{r}}\left( \cdot ,f\right) -f\left( \cdot
\right) ]d\mu \left( u\right) \right\Vert _{p}
\end{equation*}

\begin{equation*}
=I_{11}+I_{12}+I_{13}.
\end{equation*}

Since 
\begin{equation*}
\left\Vert f-S_{M_{r}}\left( f\right) \right\Vert _{p}\leq \omega \left(
1/M_{r},f\right) _{p},
\end{equation*}

using Lemma \ref{L1} for $I_{11}$we can write

\begin{equation}
I_{11}  \label{for8}
\end{equation}

\begin{equation*}
\leq \frac{1}{A_{n}^{-\alpha }}\sum\limits_{r=0}^{k-2}\int\limits_{G_{m}}%
\left\vert \sum\limits_{v=M_{r}}^{M_{r+1}-1}A_{n-v-1}^{-\alpha
-1}D_{v}\left( u\right) \right\vert \left\Vert f\left( \cdot +u\right)
-S_{M_{r}}\left( \cdot +u,f\right) \right\Vert _{p}d\mu \left( u\right)
\end{equation*}

\begin{equation*}
\leq \frac{1}{A_{n}^{-\alpha }}\sum\limits_{r=0}^{k-2}\omega \left(
1/M_{r},f\right) _{p}\int\limits_{G_{m}}\left\vert
\sum\limits_{v=M_{r}}^{M_{r+1}-1}A_{n-v-1}^{-\alpha -1}D_{v}\left( u\right)
\right\vert d\mu \left( u\right)
\end{equation*}

\begin{equation*}
\leq c\left( \alpha \right) n^{\alpha }\sum\limits_{r=0}^{k-2}\omega \left(
1/M_{r},f\right) _{p}\sqrt{M_{r+1}}\left(
\sum\limits_{v=M_{r}}^{M_{r+1}-1}\left( n-v-1\right) ^{-2\alpha -2}\right)
^{1/2}
\end{equation*}

\begin{equation*}
\leq c\left( \alpha \right) n^{\alpha }\sum\limits_{r=0}^{k-2}\omega \left(
1/M_{r},f\right) _{p}\sqrt{M_{r+1}}\left( n-M_{r+1}\right) ^{-\alpha -1}%
\sqrt{M_{r+1}}
\end{equation*}

\begin{equation*}
\leq c\left( \alpha \right) \sum\limits_{r=0}^{k-2}\frac{M_{r}}{M_{k}}\omega
\left( 1/M_{r},f\right) _{p}.
\end{equation*}

Analogously, we can prove that

\begin{equation}
I_{13}\leq c\left( \alpha \right) \sum\limits_{r=0}^{k-2}\frac{M_{r}}{M_{k}}%
\omega \left( 1/M_{r},f\right) _{p}.  \label{for9}
\end{equation}

It is evident that

\begin{equation}
\text{\ }\int\limits_{G_{m}}\sum\limits_{v=M_{r}}^{M_{r+1}-1}A_{n-v-1}^{-%
\alpha -1}D_{v}\left( u\right) \left[ S_{M_{r}}\left( x+u,f\right)
-S_{M_{r}}\left( x,f\right) \right] d\mu \left( u\right)   \label{for9.1}
\end{equation}

\begin{equation*}
=\sum\limits_{v=M_{r}}^{M_{r+1}-1}A_{n-v-1}^{-\alpha
-1}\int\limits_{G_{m}}S_{M_{r}}\left( x+u,f\right) D_{v}\left( u\right) d\mu
\left( u\right) -\sum\limits_{v=M_{r}}^{M_{r+1}-1}A_{n-v-1}^{-\alpha
-1}S_{M_{r}}\left( x,f\right) 
\end{equation*}

\begin{equation*}
=\sum\limits_{v=M_{r}}^{M_{r+1}-1}A_{n-v-1}^{-\alpha -1}S_{v}\left(
x,S_{M_{r}}\left( f\right) \right)
-\sum\limits_{v=M_{r}}^{M_{r+1}-1}A_{n-v-1}^{-\alpha -1}S_{M_{r}}\left(
x,f\right) 
\end{equation*}

\begin{equation*}
=\sum\limits_{v=M_{r}}^{M_{r+1}-1}A_{n-v-1}^{-\alpha -1}S_{M_{r}}\left(
x,f\right) -\sum\limits_{v=M_{r}}^{M_{r+1}-1}A_{n-v-1}^{-\alpha
-1}S_{M_{r}}\left( x,f\right) =0.
\end{equation*}

Hence,$\bigskip $%
\begin{equation}
I_{12}=0.  \label{for9.2}
\end{equation}

Combining (\ref{for5})-(\ref{for9.2}) we receive the proof of Lemma \ref{L3}.
\end{proof}

\begin{lemma}
\label{L4} Let $f$ $\in L^{p}(G_{m})$ for some $p$ $\in $ $\left[ 1,\infty %
\right] .$ Then for every $\alpha $ $\in $ $\left( 0,1\right) $ the
following estimations hold
\end{lemma}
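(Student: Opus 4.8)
Since Lemma \ref{L3} already controls the low-frequency block $0\le v<M_{k-1}$, the statement to be established must be the complementary estimate over $M_{k-1}\le v\le n$,
\begin{equation*}
\frac{1}{A_n^{-\alpha}}\left\Vert\int_{G_m}\sum_{v=M_{k-1}}^{n}A_{n-v}^{-\alpha}\psi_v(u)\bigl[f(\cdot+u)-f(\cdot)\bigr]d\mu(u)\right\Vert_p\le c(p,\alpha)M_k^{\alpha}\omega\left(1/M_{k-1},f\right)_p,
\end{equation*}
which is precisely the dominant term of Theorem \ref{T2}. The plan is to remove the frequencies below $M_{k-1}$ by the orthogonality device already used for $I_{12}$, and then to reduce everything to one $L^1$ estimate for the Ces\`{a}ro kernel.

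First I would write $f=S_{M_{k-1}}f+(f-S_{M_{k-1}}f)$ and split $f(\cdot+u)-f(\cdot)$ into the three pieces corresponding to $I_{11},I_{12},I_{13}$. After Abel's transformation (via (\ref{for3})) the block becomes a combination of Dirichlet kernels $D_v$ with indices $v\ge M_{k-1}$. The middle piece, built from $S_{M_{k-1}}f(\cdot+u)-S_{M_{k-1}}f(\cdot)$, then vanishes just as $I_{12}=0$, since $S_v\left(S_{M_{k-1}}f\right)=S_{M_{k-1}}f$ for every such $v$. The piece $S_{M_{k-1}}f(\cdot)-f(\cdot)$ is independent of $u$; here I would use that the high-frequency kernel has mean zero, $\int_{G_m}\sum_{v=M_{k-1}}^{n}A_{n-v}^{-\alpha}\psi_v(u)d\mu(u)=0$ (the sum omits $v=0$), so this piece vanishes as well --- a genuine simplification over Lemma \ref{L3}, where the analogous $I_{13}$ did survive. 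Only the term carrying $g:=f-S_{M_{k-1}}f$ at the argument $\cdot+u$ remains, and $\Vert g\Vert_p\le\omega(1/M_{k-1},f)_p$.

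Using $1/A_n^{-\alpha}\sim n^{\alpha}\sim M_k^{\alpha}$ from (\ref{for4}), the generalized Minkowski inequality together with the translation invariance $\Vert g(\cdot+u)\Vert_p=\Vert g\Vert_p$ then shows that the whole lemma reduces to the single kernel bound
\begin{equation*}
\int_{G_m}\left\vert\sum_{v=M_{k-1}}^{n}A_{n-v}^{-\alpha}\psi_v(u)\right\vert d\mu(u)\le c(\alpha).
\end{equation*}
After the Abel transformation the integrand is $D_{n+1}-A_{n-M_{k-1}}^{-\alpha}D_{M_{k-1}}+\sum_{v=M_{k-1}+1}^{n}A_{n-v+1}^{-\alpha-1}D_v$. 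The boundary term $A_{n-M_{k-1}}^{-\alpha}D_{M_{k-1}}$ is harmless, its coefficient being of size $M_k^{-\alpha}$ and $D_{M_{k-1}}$ localized on $I_{k-1}$ by (\ref{for1}); and for the interior terms with $v$ far below $n$ the coefficients $A_{n-v+1}^{-\alpha-1}\sim(n-v)^{-\alpha-1}$ are small, so applying Lemma \ref{L1} block-by-block over the intervals $[M_j,M_{j+1})$ --- the very computation that bounded $I_{11}$ --- gives an $O(1)$ contribution.

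The real obstacle is the part with $v$ within $O(M_{k-1})$ of $n$, equivalently the non-localized boundary kernel $D_{n+1}$ (note $n+1$ need not be an $M$-power, so $\Vert D_{n+1}\Vert_1$ grows like $\log n$). There the coefficients $A_{n-v+1}^{-\alpha-1}$ are of order one, and Lemma \ref{L1}, being a Cauchy-Schwarz ($L^1$-$L^2$) bound, is far too lossy: applied termwise it would yield $\sqrt{M_k}$ in place of the required $O(1)$. One cannot circumvent this by invoking boundedness of the partial-sum operators $S_{n+1}$, since that fails at the endpoints $p=1,\infty$ that the theorem must include. Instead I would estimate this near-diagonal block directly, keeping the cancellation among the $\psi_v$ rather than passing to absolute values, and using the explicit localization $D_{M_j}=M_j\mathbf{1}_{I_j}$ of (\ref{for1}) together with the coordinatewise ($M$-adic) decomposition of the Dirichlet kernels on the nested sets $I_j$. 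This is the Vilenkin analogue of the classical Lebesgue-constant bound of order $n^{\alpha}$ for the $L^1$-norm of the negative-order Ces\`{a}ro kernel, and I expect essentially all the difficulty of the lemma to lie here.
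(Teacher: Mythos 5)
Your reduction of the lemma is sound and coincides with the paper's: you kill the $u$-independent piece using $\int_{G_{m}}\psi _{v}\,d\mu =0$ for $v\geq 1$, kill the $S_{M_{k-1}}f\left( \cdot +u\right) $ piece by orthogonality (this is the paper's $II_{2}=0$, (\ref{for11.1})--(\ref{for12})), and then the generalized Minkowski inequality together with $1/A_{n}^{-\alpha }\sim n^{\alpha }\sim M_{k}^{\alpha }$ reduces everything to the kernel estimate
\begin{equation*}
\int_{G_{m}}\left\vert \sum\limits_{v=M_{k-1}}^{n}A_{n-v}^{-\alpha }\psi
_{v}\left( u\right) \right\vert d\mu \left( u\right) \leq c\left( \alpha
\right) ,
\end{equation*}
and merging the two stated estimates into the single block $M_{k-1}\leq v\leq n$ is harmless for Theorem \ref{T2}. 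But this kernel estimate \emph{is} the lemma: it occupies (\ref{for14})--(\ref{for19}) in the paper, and your proposal does not prove it. You correctly observe that Lemma \ref{L1} is too lossy near the diagonal $v\approx n$, that $\left\Vert D_{n+1}\right\Vert _{1}$ can grow like $\log n$, and that uniform boundedness of $S_{n}$ is unavailable at $p=1,\infty $; but at exactly this point you only list the tools you would use (``keeping the cancellation among the $\psi _{v}$'', ``$M$-adic decomposition'') and state that you expect all the difficulty to lie there. That is an accurate diagnosis of a gap, not a proof.

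The missing idea is a \emph{pointwise estimate on annuli}, not a near/far splitting of the frequency range (the paper never needs such a splitting). Fix $t\in I_{A-1}\backslash I_{A}$, $A=1,\dots ,k-1$, write $M_{k}=pM_{A}+q$ with $0\leq q<M_{A}$, and cut the sum over $v$ into consecutive blocks of length $M_{A}$. On each block $\psi _{v+rM_{A}}\left( t\right) =\psi _{rM_{A}}\left( t\right) \psi _{v}\left( t\right) $, so the block equals $\psi _{rM_{A}}\left( t\right) \sum_{v=0}^{M_{A}-1}A_{n-rM_{A}-v}^{-\alpha }\psi _{v}\left( t\right) $; an Abel transformation \emph{inside the block} produces Dirichlet kernels $D_{v}\left( t\right) $ with $v\leq M_{A}$, and the boundary term carries $D_{M_{A}}\left( t\right) $, which vanishes because $t\notin I_{A}$ by (\ref{for1}). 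Using $\left\vert D_{v}\left( t\right) \right\vert \leq v\leq M_{A}$ and summing the coefficients $A_{n-rM_{A}-v}^{-\alpha -1}$ over all blocks (the last full block and the $q$ leftover terms are estimated directly) gives the pointwise bound $c\left( \alpha \right) M_{A}^{1-\alpha }$ on $I_{A-1}\backslash I_{A}$, i.e.\ (\ref{for15})--(\ref{for18}). Since $\mu \left( I_{A-1}\backslash I_{A}\right) \leq 1/M_{A-1}$, integrating over the annuli and over $I_{k-1}$ yields $\sum_{A}c\left( \alpha \right) M_{A}^{-\alpha }\leq c\left( \alpha \right) $ by the geometric growth of $M_{A}$, which is (\ref{for19}). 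This factor-out-and-Abel argument on annuli, powered by the vanishing of $D_{M_{A}}$ off $I_{A}$, is precisely the cancellation mechanism your sketch appeals to but never supplies; without it your proposal establishes nothing beyond the reduction.
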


\begin{equation*}
\frac{1}{A_{n}^{-\alpha }}\left\Vert
\int\limits_{G_{m}}\sum\limits_{v=M_{k-1}}^{M_{k}-1}A_{n-v}^{-\alpha }\psi
_{v}\left( u\right) \left[ f\left( \cdot +u\right) -f\left( \cdot \right) %
\right] d\mu \left( u\right) \right\Vert _{p}
\end{equation*}

\begin{equation*}
\leq c\left( p,\alpha \right) \omega \left( 1/M_{k-1},f\right)
_{p}M_{k}^{\alpha }
\end{equation*}

and

\begin{equation*}
\frac{1}{A_{n}^{-\alpha }}\left\Vert
\int\limits_{G_{m}}\sum\limits_{v=M_{k}}^{n}A_{n-v}^{-\alpha }\psi
_{v}\left( u\right) \left[ f\left( \cdot +u\right) -f\left( \cdot \right) %
\right] d\mu \left( u\right) \right\Vert _{p}
\end{equation*}

\begin{equation*}
\leq c\left( p,\alpha \right) \omega \left( 1/M_{k},f\right)
_{p}M_{k}^{\alpha }
\end{equation*}

where $M_{k}\leq n<M_{k+1}.$

\begin{proof}[Proof of Lemma \protect\ref{L4}]
We can write

\begin{equation}
II=\frac{1}{A_{n}^{-\alpha }}\left\Vert
\int\limits_{G_{m}}\sum\limits_{v=M_{k-1}}^{M_{k}-1}A_{n-v}^{-\alpha }\psi
_{v}\left( u\right) \left[ f\left( \cdot +u\right) -f\left( \cdot \right) %
\right] d\mu \left( u\right) \right\Vert _{p}  \label{for11}
\end{equation}

\begin{equation*}
=\frac{1}{A_{n}^{-\alpha }}\left\Vert
\int\limits_{G_{m}}\sum\limits_{v=M_{k-1}}^{M_{k}-1}A_{n-v}^{-\alpha }\psi
_{v}\left( u\right) f\left( \cdot +u\right) d\mu \left( u\right) \right\Vert
_{p}
\end{equation*}%
\begin{equation*}
\leq \frac{1}{A_{n}^{-\alpha }}\left\Vert
\int\limits_{G_{m}}\sum\limits_{v=M_{k-1}}^{M_{k}-1}A_{n-v}^{-\alpha }\psi
_{v}\left( u\right) \left[ f\left( \cdot +u\right) -S_{M_{k-1}}\left( \cdot
+u,f\right) \right] d\mu \left( u\right) \right\Vert _{p}
\end{equation*}

\begin{equation*}
+\frac{1}{A_{n}^{-\alpha }}\left\Vert
\int\limits_{G_{m}}\sum\limits_{v=M_{k-1}}^{M_{k}-1}A_{n-v}^{-\alpha }\psi
_{v}\left( u\right) S_{M_{k-1}}\left( \cdot +u,f\right) d\mu \left( u\right)
\right\Vert _{p}=II_{1}+II_{2}.
\end{equation*}

Since%
\begin{equation}
\int\limits_{G_{m}}\sum\limits_{v=M_{k-1}}^{M_{k}-1}A_{n-v}^{-\alpha }\psi
_{v}\left( u\right) S_{M_{k-1}}\left( x+u,f\right) d\mu \left( u\right) 
\label{for11.1}
\end{equation}

\begin{equation*}
=\sum\limits_{j=0}^{M_{k-1}-1}\widehat{f}\left( j\right) \psi _{j}\left(
x\right) \sum\limits_{v=M_{k-1}}^{M_{k}-1}A_{n-v}^{-\alpha
}\int\limits_{G_{m}}\psi _{v}\left( u\right) \psi _{j}\left( u\right) d\mu
\left( u\right) =0,
\end{equation*}%
for $II_{2}$ we obtain

\begin{equation}
II_{2}=0.  \label{for12}
\end{equation}

Using generalized Minkowski's inequality we have

\begin{eqnarray}
\ \text{\ }II_{1} &\leq &\frac{1}{A_{n}^{-\alpha }}\int\limits_{G_{m}}\left%
\vert \sum\limits_{v=M_{k-1}}^{M_{k}-1}A_{n-v}^{-\alpha }\psi _{v}\left(
u\right) \right\vert  \label{for13} \\
&&\times \left\Vert \left[ f\left( \cdot \oplus u\right) -S_{M_{k-1}}\left(
\cdot \oplus u,f\right) \right] \right\Vert _{p}d\mu \left( u\right)  \notag
\end{eqnarray}

\begin{equation*}
\leq c\left( \alpha \right) n^{\alpha }\omega \left( 1/M_{k-1},f\right)
_{p}\int\limits_{G_{m}}\left\vert
\sum\limits_{v=M_{k-1}}^{M_{k}-1}A_{n-v}^{-\alpha }\psi _{v}\left( u\right)
\right\vert d\mu \left( u\right) .
\end{equation*}

Let $t\in I_{A-1}\backslash I_{A},\,\ A=1,2,...,k-1\ $and $\ M_{k}=pM_{A}+q,$
where $0\leq q<M_{A}.$Then we have%
\begin{eqnarray}
&&\sum\limits_{v=M_{k-1}}^{M_{k}-1}A_{n-v}^{-\alpha }\psi _{v}\left( t\right)
\label{for14} \\
&=&\sum\limits_{v=M_{k-1}}^{pM_{A}-1}A_{n-v}^{-\alpha }\psi _{v}\left(
t\right) +\sum\limits_{v=pM_{A}}^{M_{k}-1}A_{n-v}^{-\alpha }\psi _{v}\left(
t\right)  \notag \\
&=&\sum\limits_{r=M_{k-1}/M_{A}}^{p-1}\sum\limits_{v=rM_{A}}^{\left(
r+1\right) M_{A}-1}A_{n-v}^{-\alpha }\psi _{v}\left( t\right)  \notag \\
&&+\sum\limits_{v=0}^{q-1}A_{n-v-pM_{A}}^{-\alpha }\psi _{v+pM_{A}}\left(
t\right)  \notag \\
&=&\sum\limits_{r=M_{k-1}/M_{A}}^{p-1}\sum%
\limits_{v=0}^{M_{A}-1}A_{n-v-rM_{A}}^{-\alpha }\psi _{v+rM_{A}}\left(
t\right)  \notag \\
&&+\sum\limits_{v=0}^{q-1}A_{n-v-pM_{A}}^{-\alpha }\psi _{v+pM_{A}}\left(
t\right)  \notag
\end{eqnarray}

\begin{eqnarray}
&=&\sum\limits_{r=M_{k-1}/M_{A}}^{p-2}\psi _{rM_{A}}\left( t\right)
\sum\limits_{v=0}^{M_{A}-1}A_{n-v-rM_{A}}^{-\alpha }\psi _{v}\left( t\right)
\notag \\
&&+\psi _{\left( p-1\right) M_{A}}\left( t\right)
\sum\limits_{v=0}^{M_{A}-1}A_{n-v-\left( p-1\right) M_{A}}^{-\alpha }\psi
_{v}\left( t\right)  \notag \\
&&+\psi _{rM_{A}}\left( t\right) \sum\limits_{v=0}^{q-1}A_{q-v}^{-\alpha
}\psi _{v}\left( t\right)  \notag \\
&=&A_{1}+A_{2}+A_{3}.  \notag
\end{eqnarray}

Since $D_{M_{A}}\left( t\right) =0,t\in I_{A-1}\backslash I_{A}$ and $%
\left\vert D_{k}\left( t\right) \right\vert \leq k,$ from Abel's
transformation,we have

\begin{eqnarray}
\left\vert A_{1}\right\vert &=&\left\vert
\sum\limits_{r=M_{k-1}/M_{A}}^{p-2}\psi _{rM_{A}}\left( t\right)
\sum\limits_{v=0}^{M_{A}-2}A_{n-v-rM_{A}}^{-\alpha -1}D_{v}\left( t\right)
\right\vert  \label{for15} \\
&\leq &c\left( \alpha \right)
M_{A}\sum\limits_{r=M_{k-1}/M_{A}}^{p-2}\sum\limits_{v=0}^{M_{A}}\left(
n-rM_{A}-v\right) ^{-\alpha -1}  \notag \\
&\leq &c\left( \alpha \right) M_{A}\left( n-\left( p-1\right) M_{A}\right)
^{-\alpha }  \notag \\
&\leq &c\left( \alpha \right) M_{A}^{1-\alpha }  \notag
\end{eqnarray}

For $A_{2}$ we have

\begin{eqnarray}
\left\vert A_{2}\right\vert &\leq &c\left( \alpha \right)
\sum\limits_{v=0}^{M_{A}-1}\left( n-\left( p-1\right) M_{A}-v\right)
^{-\alpha }  \label{for16} \\
&\leq &c\left( \alpha \right) \sum\limits_{v=0}^{M_{A}-1}\left(
M_{A}+q-v\right) ^{-\alpha }\leq c\left( \alpha \right) M_{A}^{1-\alpha }. 
\notag
\end{eqnarray}

We can write

\begin{eqnarray}
\left\vert A_{3}\right\vert &\leq &c\left( \alpha \right)
\sum\limits_{v=0}^{q-1}\left( q-v\right) ^{-\alpha }  \label{for17} \\
&\leq &c\left( \alpha \right) q^{1-\alpha }\leq c\left( \alpha \right)
M_{A}^{1-\alpha }.  \notag
\end{eqnarray}

Combining (\ref{for14})-(\ref{for17}) \ we obtain

\begin{equation}
\left\vert \sum\limits_{v=M_{k-1}}^{M_{k}-1}A_{n-v}^{-\alpha }\psi
_{v}\left( u\right) \right\vert \leq c\left( \alpha \right) M_{A}^{1-\alpha
},\text{ }t\in I_{A-1}\backslash I_{A},A=1,...,k-1.\text{\ }  \label{for18}
\end{equation}

Hence, we can write

\begin{eqnarray}
&&\int\limits_{G_{m}}\left\vert
\sum\limits_{v=M_{k-1}}^{M_{k}-1}A_{n-v}^{-\alpha }\psi _{v}\left( u\right)
\right\vert d\mu \left( u\right)  \label{for19} \\
&=&\sum\limits_{A=1}^{k-1}\int\limits_{I_{A-1}\backslash I_{A}}\left\vert
\sum\limits_{v=M_{k-1}}^{M_{k}-1}A_{n-v}^{-\alpha }\psi _{v}\left( u\right)
\right\vert d\mu \left( u\right)  \notag \\
&&+\int\limits_{I_{_{k-1}}}\left\vert
\sum\limits_{v=M_{k-1}}^{M_{k}-1}A_{n-v}^{-\alpha }\psi _{v}\left( u\right)
\right\vert d\mu \left( u\right)  \notag \\
&\leq &c\left( \alpha \right) \sum\limits_{A=1}^{k}\frac{1}{M_{A}}%
M_{A}^{1-\alpha }+\frac{c\left( \alpha \right) }{M_{k}}M_{k}^{1-\alpha } 
\notag \\
&\leq &c\left( \alpha \right) .  \notag
\end{eqnarray}

\bigskip Combining (\ref{for18})-(\ref{for19}) we have

\begin{equation}
\ II_{1}\leq c\left( \alpha \right) \omega \left( 1/M_{k-1},f\right)
_{p}M_{k}^{\alpha }.  \label{for20}
\end{equation}

Combining (\ref{for13}), (\ref{for11}) and (\ref{for20}) we conclude that

\begin{equation*}
\frac{1}{A_{n}^{-\alpha }}\left\Vert
\int\limits_{G_{m}}\sum\limits_{v=M_{k-1}}^{M_{k}-1}A_{n-v}^{-\alpha }\psi
_{v}\left( u\right) \left[ f\left( \cdot +u\right) -f\left( \cdot \right) %
\right] d\mu \left( u\right) \right\Vert _{p}
\end{equation*}

\begin{equation*}
\leq c\left( \alpha \right) \omega \left( 1/M_{k-1},f\right)
_{p}M_{k}^{\alpha }.
\end{equation*}

Analogously, we can prove that

\begin{equation*}
\frac{1}{A_{n}^{-\alpha }}\left\Vert
\int\limits_{G_{m}}\sum\limits_{v=M_{k}}^{n}A_{n-v}^{-\alpha }\psi
_{v}\left( u\right) \left[ f\left( \cdot +u\right) -f\left( \cdot \right) %
\right] d\mu \left( u\right) \right\Vert _{p}
\end{equation*}

\begin{equation*}
\leq c\left( \alpha \right) \omega \left( 1/M_{k},f\right) _{p}M_{k}^{\alpha
}.
\end{equation*}

\bigskip\ Lemma \ref{L4} proved.
\end{proof}

\begin{proof}[Proof of Theorem \protect\ref{T2}]
We can write

\begin{eqnarray*}
&&\sigma _{n}^{-\alpha }(f,x)-f(x) \\
&=&\frac{1}{A_{n}^{-\alpha }}\int\limits_{G_{m}}\sum%
\limits_{v=0}^{M_{k-1}-1}A_{n-v}^{-\alpha }\psi _{v}\left( x\right) \left[
f\left( \cdot +u\right) -f\left( \cdot \right) \right] d\mu \left( u\right)
\\
&&+\frac{1}{A_{n}^{-\alpha }}\int\limits_{G_{m}}\sum%
\limits_{v=M_{k-1}}^{M_{k}-1}A_{n-v}^{-\alpha }\psi _{v}\left( x\right) %
\left[ f\left( \cdot +u\right) -f\left( \cdot \right) \right] d\mu \left(
u\right) \\
&&+\frac{1}{A_{n}^{-\alpha }}\int\limits_{G_{m}}\sum%
\limits_{v=M_{k}}^{n}A_{n-v}^{-\alpha }\psi _{v}\left( x\right) \left[
f\left( \cdot +u\right) -f\left( \cdot \right) \right] d\mu \left( u\right)
\\
&=&I+II+III
\end{eqnarray*}

Since

\begin{equation*}
\left\Vert \sigma _{n}^{-\alpha }\left( f,\cdot \right) -f\left( \cdot
\right) \right\Vert _{p}\leq \left\Vert I\right\Vert _{p}+\left\Vert
II\right\Vert _{p}+\left\Vert III\right\Vert _{p}
\end{equation*}

From Lemmas \ref{L3} and \ref{L4} the proof of theorem is complete.
\end{proof}


\begin{thebibliography}{99}
\bibitem{AVDR} G. N. Agaev, N.Ya. Vilenkin, G.M. Dzhafarli, and A.I.
Rubinshtejn, Multiplicative systems of functions and harmonic analysis on
zero-dimensional groups, Baku, Ehlm, 1981 (in Russian).

\bibitem{Fi} N.J.fine,Ces\`{a}ro summability of Walsh-Fourier series, 
\textit{Proc. Nat.Acad. Sci. U.S.A }41 (1995), 558-591.

\bibitem{Gol} B. I. Golubov, A.V.Efimov, and V.A.Skvortsov,"series and
transformation of Walsh,"Nauka,Moscow,1987 [In Russian];English
translation,Kluwer Academic,Dordrecht,1991.

\bibitem{GoAMH} U. Goginava, On the uniform convergence of Walsh-Fourier
series. Acta Math. Hungar. 93 (2001), no. 1-2, 59--70.

\bibitem{GoJAT} U. Goginava, On the approximation properties of Ces\`{a}ro
means of negative order of Walsh-Fourier series. J. Approx. Theory 115
(2002), no. 1, 9--20.

\bibitem{GoJAT2} U. Goginava, Uniform convergence of Ces\`{a}ro means of
negative order of double Walsh-Fourier series. J. Approx. Theory 124 (2003),
no. 1, 96--108.

\bibitem{GNCz} U. Goginava and K. Nagy, On the maximal operator of
Walsh-Kaczmarz-Fej\'{e}r means. Czechoslovak Math. J. 61(136) (2011), no. 3,
673--686.

\bibitem{GGAMH} G. G\'at and U. Goginava, A weak type inequality for the
maximal operator of $(C,\alpha)$-means of Fourier series with respect to the
Walsh-Kaczmarz system. Acta Math. Hungar. 125 (2009), no. 1-2, 65--83.

\bibitem{GN} G. G\'{a}t and K. Nagy, Ces\`{a}ro summability of the character
system of the p-series field in the Kaczmarz rearrangement. Anal. Math. 28
(2002), no. 1, 1--23.

\bibitem{Na} K. Nagy, Approximation by Ces\`{a}ro means of negative order of
Walsh-Kaczmarz-Fourier series. East J. Approx. 16 (2010), no. 3, 297--311.

\bibitem{SW} P. Simon and F. Weisz, Weak inequalities for Ces\`{a}ro and
Riesz summability of Walsh-Fourier series. J. Approx. Theory 151 (2008), no.
1, 1--19.

\bibitem{Su} F. Schipp, \"{U}ber gewisse Maximaloperatoren, Ann. Univ. Sci.
Budapest. Sect. Math. 18(1975), 189-195.

\bibitem{Sw} F. Schipp, W.R. Wade, P. Simon and J. P\'{a}l, "Walsh Series,
Introduction to Dyadic Harmonic Analysis," Hilger, Bristol, 1990.

\bibitem{Tev} V. I. Tevzadze, Uniform $\left( C,-\alpha \right) $
summability of Fourier series with respect to the Walsh-Paley system. Acta
Math. Acad. Paedagog. Nyh\'{a}zi. (N.S.) 22 (2006), no. 1, 41--61
(electronic).

\bibitem{Zh} L. V. Zhizhiashvili, Trigonometric Fourier series and their
conjugates, Tbilisi, 1993 (Russian); English transl.: Kluwer Acad. publ;
1996.

\bibitem{Zy} A.Zygmund , Trigonometric series.Vo1.1,Cambridge University
Press, Cambridge,UK, 1959.
\end{thebibliography}
\end{document}